\newtheorem{theorem}{Theorem}[section]
\newtheorem{lemma}[theorem]{Lemma}
\newtheorem{corollary}[theorem]{Corollary}
\theoremstyle{definition}
\theoremstyle{remark}
\numberwithin{equation}{section}
\begin{document}

\setcounter{page}{1}

\title[Norm--parallelism in spaces of continuous functions]
{Characterizations of norm--parallelism in spaces of continuous functions}

\author[A. Zamani]{Ali Zamani}
\address{Department of Mathematics, Farhangian University, Tehran, Iran.}
\email{zamani.ali85@yahoo.com}

\subjclass[2010]{47A30, 46B20, 46E15.}

\keywords{Norm--parallelism, Banach space of continuous functions,
Probability measure.}
\begin{abstract}
In this paper, we consider the characterization of norm--parallelism
problem in some classical Banach spaces. In particular, for two
continuous functions $f, g$ on a compact Hausdorff space $K$,
we show that $f$ is norm--parallel to $g$ if and only if there exists a probability
measure (i.e. positive and of full measure equal to $1$) $\mu$ with its support contained
in the norm attaining set $\{x\in K: \, |f(x)| = \|f\|\}$ such that
$\big|\int_K \overline{f(x)}g(x)d\mu(x)\big| = \|f\|\,\|g\|$.
\end{abstract}
\maketitle
\section{Introduction and preliminaries}
Let $(X, \|\cdot\|)$ be a normed space and denote, as usual, by
$\mathbb{B}_X$ and $\mathbb{S}_X$ its closed unit ball and unit sphere, respectively,
and denote the topological dual of $X$ by $X^*$.
Let $\mathcal{C}_b(\Omega)$ and $\mathcal{C}(K)$ denote the
Banach spaces of all bounded continuous functions on a locally
compact Hausdorff space $\Omega$, with the usual norm
$\|f\| = \displaystyle{\sup_{x\in \Omega}}|f(x)|\, (f\in \mathcal{C}_b(\Omega))$
and all continuous functions on a compact Hausdorff space $K$,
with the usual norm $\|f\| = \displaystyle{\max_{x\in K}}|f(x)|\, (f\in \mathcal{C}(K))$,
respectively. By $\mathcal{C}_u(\mathbb{B}_X,X)$ we denote the space of all uniformly
continuous $X$ valued functions on $\mathbb{B}_X$ endowed with the supremum norm.
Given a bounded function $f : \,\mathbb{S}_X \longrightarrow X$,
its numerical radius is
$$v(f) : = \sup\{|x^*(f(x))|\,: \,\,\|x^*\| = x^*(x) = 1\}.$$
Let us comment that for a bounded function $f : \,\mathbb{B}_X \longrightarrow X$, the above
definitions apply by just considering $v(f) := v(f|_{\mathbb{S}_X}$).

Recall that an element $x\in X$ is said to be norm--parallel to another element
$y\in X$ (see \cite{S, Z.M.1}), in short
$x\parallel y$, if
\begin{align*}
\|x+\lambda y\|=\|x\|+\|y\| \qquad \mbox{for some}\,\,
\lambda\in\mathbb{T}.
\end{align*}
Here, as usual, $\mathbb{T}$ is the unit circle of the complex plane.
In the framework of inner product spaces, the norm--parallel relation
is exactly the usual vectorial parallel relation, that is,
$x\parallel y$ if and only if $x$ and $y$ are linearly dependent.
In the setting of normed linear spaces, two linearly
dependent vectors are norm--parallel, but the converse is false in general.
Notice that the norm--parallelism is symmetric and $\mathbb{R}$-homogenous,
but not transitive (i.e., $x\parallel y$ and $y\parallel z \nRightarrow x\parallel z$;
see \cite[Example 2.7]{Z.M.2}, unless $X$ is smooth at $y$; see \cite[Theorem 3.1]{W}).

In the context of continuous functions,
the well-known Daugavet equation
\begin{align*}
\|T + Id\| = \|T\| + 1
\end{align*}
is a particular case of parallelism.
Here $Id$ denotes, as usual, the identity function.
Applications of this equation arise in solving a variety of problems
in approximation theory; see \cite{We} and the references therein.

Some characterizations of the norm--parallelism for operators on various
Banach spaces and elements of an arbitrary Hilbert $C^*$-module were given in
\cite{B.B, B.C.M.W.Z, G, M.S.P, S.2, W, Z, Z.M.1, Z.M.2}.

In particular, for bounded linear operators $T, S$ on a Hilbert space
$(H, [\cdot, \cdot])$, it was proved in \cite[Theorem 3.3]{Z.M.1}
that $T\parallel S$ if and only if there exists a sequence of unit
vectors $\{\xi_{n}\}$ in $H$ such that
\begin{align*}
\lim_{n\rightarrow \infty } \big|[T\xi_{n}, S\xi_{n}]\big| = \|T\|\,\|S\|.
\end{align*}
Further, for compact operators $T, S$ it was obtained in
\cite[Theorem 2.10]{Z} that $T\parallel S$ if and only if
there exists a unit vector $\xi \in H$ such that
\begin{align*}
\big|[T\xi , S\xi ]\big| = \|T\|\,\|S\|.
\end{align*}
In \cite{Z.M.2}, the authors also considered the characterization of norm
parallelism problem for operators when the operator norm is replaced by
the Schatten $p$-norm ($1<p<\infty $). More precisely, it was proved in
\cite[Proposition 2.19]{Z.M.2} that
$T\parallel S$ in the Schatten $p$-norm if and only if
\begin{align*}
\Big|{\mathrm{tr}}(|T|^{p-1}U^{*}S)\Big| = {\|T\|_p}^{p -1}\|S\|_p,
\end{align*}
where $T = U|T|$ is the polar decomposition of $T$.

Some other related topics can be found in \cite{A.R, C.L.W, G.S.P, G, M.Z, Z},
and the references therein.

It is our aim in the next section to give characterizations of the
norm--parallelism in $\mathcal{C}_b(\Omega)$ and $\mathcal{C}(K)$.
More precisely, for $f, g\in \mathcal{C}_b(\Omega)$ we prove that
$f\parallel g$ if and only if there exists a sequence of probability
measures $\mu_n$ concentrated at the set
$\{x\in \Omega: \, |f(x)| \geq \|f\| - \varepsilon\}$ $(\varepsilon >0)$
such that
\begin{align*}
\Big|\lim_{n\rightarrow\infty}\int_\Omega \overline{f(x)}g(x)d\mu_n(x)\Big|
= {\|f\|}^{-1}\|g\|\lim_{n\rightarrow\infty}\int_\Omega |f(x)|^2d\mu_n(x).
\end{align*}
Moreover, for $f, g\in \mathcal{C}(K)$ we show that $f\parallel g$
if and only if there exists a probability measure $\mu$ with support
contained in the norm attaining set $\{x\in K: \, |f(x)| = \|f\|\}$
such that
\begin{align*}
\Big|\int_K \overline{f(x)}g(x)d\mu(x)\Big| = \|f\|\,\|g\|.
\end{align*}
Finally, in the next section, we state a characterization of the norm-parallelism for
uniformly continuous $X$ valued functions on $\mathbb{B}_X$ to the identity function.
Actually, we show that if $X$ is a Banach space and $f\in\mathcal{C}_u(\mathbb{B}_X,X)$,
then $f\parallel Id$ if and only if $\|f\| = v(f)$.

\section{Main results}
We begin with the following results, which will be useful in other contexts as well.
\begin{lemma}\cite[Theorem 4.1]{Z.M.1}\label{T.1.1}
Let $X$ be a normed space. For $x , y\in X$ the following statements are equivalent:
\begin{itemize}
\item[(i)] $x\parallel y$.
\item[(ii)] There exists a norm one linear functional $\varphi$ over $X$ such that
$\varphi(x) = \|x\|$ and $|\varphi(y)| = \|y\|.$
\end{itemize}
\end{lemma}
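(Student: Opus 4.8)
The plan is to prove the two implications separately, with the Hahn--Banach theorem serving as the single nontrivial ingredient and the rest being elementary estimates. For the direction (ii) $\Rightarrow$ (i), I would start from a norm-one functional $\varphi$ satisfying $\varphi(x) = \|x\|$ and $|\varphi(y)| = \|y\|$. Writing $\varphi(y) = \|y\|\,\omega$ with $\omega\in\mathbb{T}$ and setting $\lambda = \overline{\omega}$, one computes directly that $\varphi(\lambda y) = \overline{\omega}\,\|y\|\,\omega = \|y\|$, so $\varphi(x + \lambda y) = \|x\| + \|y\|$. Since $\|\varphi\| = 1$ this yields $\|x+\lambda y\| \ge |\varphi(x+\lambda y)| = \|x\|+\|y\|$, while the triangle inequality together with $|\lambda| = 1$ gives the reverse inequality $\|x+\lambda y\|\le\|x\|+\|y\|$. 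Hence $\|x+\lambda y\| = \|x\|+\|y\|$, i.e. $x\parallel y$. This direction is purely computational.

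For the direction (i) $\Rightarrow$ (ii), I would take $\lambda\in\mathbb{T}$ with $\|x+\lambda y\| = \|x\|+\|y\|$ and invoke the Hahn--Banach theorem to produce a norm-one functional $\varphi$ attaining its norm at $x+\lambda y$, that is, $\varphi(x+\lambda y) = \|x+\lambda y\| = \|x\|+\|y\|$. The real work is then to split this single scalar equality into the two desired conditions. From $\operatorname{Re}\varphi(x)\le|\varphi(x)|\le\|x\|$ and $\operatorname{Re}(\lambda\varphi(y))\le|\lambda\varphi(y)| = |\varphi(y)|\le\|y\|$, the identity $\operatorname{Re}\varphi(x) + \operatorname{Re}(\lambda\varphi(y)) = \|x\|+\|y\|$ obtained by taking real parts forces both of these inequalities to be equalities. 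Tracing them back gives $\varphi(x) = \|x\|$ and $|\varphi(y)| = \|y\|$, which is exactly (ii).

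The only point requiring care --- and hence the main (minor) obstacle --- is this splitting argument in the complex scalar case. One must pass to real parts to decouple the two summands, and then observe that $\operatorname{Re}\varphi(x)=\|x\|$ combined with $|\varphi(x)|\le\|x\|$ forces $\varphi(x)=\|x\|$ (since $\|x\| = \operatorname{Re}\varphi(x)\le|\varphi(x)|\le\|x\|$ collapses to equality, and a complex number whose real part equals its modulus is real), whereas $\operatorname{Re}(\lambda\varphi(y))=\|y\|$ with $|\varphi(y)|\le\|y\|$ pins down only the modulus $|\varphi(y)|=\|y\|$. The phase $\lambda$ must be tracked throughout precisely so that it is the modulus, and not the value, of $\varphi(y)$ that is determined, which is what the statement (ii) requires.
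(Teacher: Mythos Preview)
Your proof is correct. Note, however, that the paper does not supply its own proof of this lemma: it is quoted verbatim from \cite[Theorem~4.1]{Z.M.1} and used as a black box, so there is no in-paper argument to compare against. Your approach---Hahn--Banach applied to $x+\lambda y$ to produce a norming functional, followed by the real-part splitting to force $\varphi(x)=\|x\|$ and $|\varphi(y)|=\|y\|$ separately, together with a direct computation for the reverse implication---is exactly the standard proof of this fact and is carried out cleanly. The only trivial edge case you do not mention is $x=y=0$ (where one simply picks any norm-one functional, assuming $X\neq\{0\}$), but this is harmless.
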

\begin{lemma}\cite[Theorem 3.1]{K}\label{lemma.3.02}
Let $\Omega$ be a locally compact Hausdorff space and let $f, g\in \mathcal{C}_b(\Omega)$.
Then
$$\lim_{t\rightarrow 0^{+}}\frac{\|f + tg\| - \|f\|}{t}
= \inf_{\varepsilon>0}\sup_{x\in M^\varepsilon_f}\mbox{Re}\big(e^{-i\arg (f(x))}g(x)\big),$$
where $M^\varepsilon_f = \{x\in \Omega: \, |f(x)| \geq \|f\| - \varepsilon\}$.
\end{lemma}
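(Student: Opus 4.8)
The plan is to identify both sides with the right-hand derivative of the convex function $h(t):=\|f+tg\|$. First I would record that $t\mapsto|f(x)+tg(x)|$ is convex for each fixed $x$, being the modulus of an affine function; hence $h$, a supremum of convex functions, is itself convex, and consequently the one-sided limit on the left exists and equals $\inf_{t>0}\frac{h(t)-h(0)}{t}$. Writing $S_\varepsilon:=\sup_{x\in M^\varepsilon_f}\mathrm{Re}\big(e^{-i\arg(f(x))}g(x)\big)$, I would note that $S_\varepsilon$ is non-decreasing in $\varepsilon$, so the right-hand side $D:=\inf_{\varepsilon>0}S_\varepsilon$ is just $\lim_{\varepsilon\to0^+}S_\varepsilon$. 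After disposing of the trivial case $f\equiv0$ by direct computation, I restrict to $0<\varepsilon<\|f\|$, where every $x\in M^\varepsilon_f$ satisfies $|f(x)|\ge\|f\|-\varepsilon>0$, so that $e^{-i\arg(f(x))}=\overline{f(x)}/|f(x)|$ is well defined.

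For the lower bound $h'(0^+)\ge D$, I would use that for every $x\in M^\varepsilon_f$ one has $\|f+tg\|\ge|f(x)+tg(x)|\ge\mathrm{Re}\big(e^{-i\arg(f(x))}(f(x)+tg(x))\big)=|f(x)|+t\,\mathrm{Re}\big(e^{-i\arg(f(x))}g(x)\big)$, whence $\|f+tg\|\ge\|f\|-\varepsilon+t\,\mathrm{Re}\big(e^{-i\arg(f(x))}g(x)\big)$ because $|f(x)|\ge\|f\|-\varepsilon$. Taking the supremum over $x\in M^\varepsilon_f$ gives $\|f+tg\|\ge\|f\|-\varepsilon+tS_\varepsilon\ge\|f\|-\varepsilon+tD$, and letting $\varepsilon\to0^+$ yields $\|f+tg\|\ge\|f\|+tD$ for every $t>0$. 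Dividing by $t$ and invoking the infimum characterization of $h'(0^+)$ then gives the inequality.

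For the upper bound $h'(0^+)\le D$, I would split the defining supremum as $\|f+tg\|=\max\big(\sup_{x\in M^\varepsilon_f}|f(x)+tg(x)|,\ \sup_{x\in\Omega\setminus M^\varepsilon_f}|f(x)+tg(x)|\big)$. On the complement, $|f(x)+tg(x)|\le|f(x)|+t\|g\|<\|f\|-\varepsilon+t\|g\|$, which stays below the first supremum for small $t$. On $M^\varepsilon_f$, writing $|f(x)+tg(x)|^2$ as $\big(|f(x)|+t\,\mathrm{Re}(e^{-i\arg(f(x))}g(x))\big)^2$ plus a remainder of order $t^2$ and using $|f(x)|\ge\|f\|-\varepsilon>0$, I would extract the bound $|f(x)+tg(x)|\le\|f\|+tS_\varepsilon+C(\varepsilon)t^2$ with $C(\varepsilon)=\|g\|^2/(\|f\|-\varepsilon)$, valid once $t\le(\|f\|-\varepsilon)/(2\|g\|)$. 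Hence $h(t)\le\|f\|+tS_\varepsilon+C(\varepsilon)t^2$ for small $t$; dividing by $t$ and letting $t\to0^+$ gives $h'(0^+)\le S_\varepsilon$, and taking the infimum over $\varepsilon$ gives $h'(0^+)\le D$.

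The main obstacle is that $\Omega$ need not be compact, so one cannot extract an honest maximizer of $|f|$; this is precisely why the statement uses $\inf_\varepsilon\sup_{M^\varepsilon_f}$ rather than a maximum over the peak set. The delicate point is therefore the coupling of $\varepsilon$ and $t$ in the upper bound: the near-level set $M^\varepsilon_f$ must be taken wide enough to capture all asymptotically relevant points, while one simultaneously verifies that its complement contributes nothing to first order in $t$. This is exactly what forces the two iterated limits, first $t\to0^+$ at fixed $\varepsilon$ and then $\varepsilon\to0^+$, and it is the only place where genuine care is required.
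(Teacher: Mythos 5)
The paper never proves this statement: it is imported verbatim as a quoted result, \cite[Theorem 3.1]{K}, so there is no in-paper argument to compare yours against; what follows is an assessment of your proof on its own merits. Your argument is correct and essentially self-contained. Convexity of $h(t)=\|f+tg\|$ (a supremum of moduli of affine functions) legitimately gives both the existence of the one-sided limit and its identification with $\inf_{t>0}\big(h(t)-h(0)\big)/t$; the lower bound $h(t)\ge \|f\|-\varepsilon+tS_\varepsilon\ge\|f\|-\varepsilon+tD$, obtained by evaluating at points of $M^\varepsilon_f$ and letting $\varepsilon\to0^+$, is sound; and the upper bound via $\sqrt{a^2+b^2}\le a+b^2/(2a)$, yielding $h(t)\le\|f\|+tS_\varepsilon+\|g\|^2t^2/(\|f\|-\varepsilon)$ once $t\le(\|f\|-\varepsilon)/(2\|g\|)$, is exactly the right way to couple $t$ with $\varepsilon$, after which $t\to0^+$ followed by $\inf_\varepsilon$ closes the argument. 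Two points deserve more explicit care than you give them. First, the claim that the supremum over $\Omega\setminus M^\varepsilon_f$ ``stays below the first supremum'' needs its one-line justification spelled out: $\|f\|-\varepsilon+t\|g\|\le \|f\|+tS_\varepsilon+C(\varepsilon)t^2$ whenever $t\le\varepsilon/(2\|g\|)$, using $S_\varepsilon\ge-\|g\|$; this is easy but it is precisely where the complement is shown to contribute nothing, so it should not be left implicit. Second, the case $f\equiv0$ is not actually disposable ``by direct computation'': there the left-hand side equals $\|g\|$, while $\arg(f(x))$ is undefined, and under the usual convention $\arg 0=0$ the right-hand side becomes $\sup_{x\in\Omega}\mathrm{Re}\,g(x)$, which can differ from $\|g\|$ (take $g\equiv i$). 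This degeneracy is inherited from the statement itself, which tacitly assumes $f\neq0$; you should say so explicitly rather than fold it into a trivial case, since as literally stated the identity fails there.
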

We use some techniques of \cite{K} to prove the following theorem.
Recall that a probability measure is a positive measure of total mass $1$.
\begin{theorem}\label{th.2}
Let $\Omega$ be a locally compact Hausdorff space and let $f, g\in \mathcal{C}_b(\Omega)$.
Then the following statements are equivalent:
\begin{itemize}
\item[(i)] For every $\varepsilon >0$, there exists a sequence of probability measures $\mu_n$
concentrated at $M^\varepsilon_f$ such that
$$\|f\|\Big|\lim_{n\rightarrow\infty}\int_\Omega \overline{f(x)}g(x)d\mu_n(x)\Big|
= \|g\|\lim_{n\rightarrow\infty}\int_\Omega |f(x)|^2d\mu_n(x).$$
\item[(ii)] $f\parallel g$.
\end{itemize}
\end{theorem}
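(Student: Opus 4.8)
The plan is to combine the two lemmas with one elementary observation. For any $h\in\mathcal C_b(\Omega)$ the map $t\mapsto\|f+th\|$ is convex, and I would first record that $f\parallel g$ holds if and only if there is $\lambda\in\mathbb T$ with $\lim_{t\to0^{+}}\frac{\|f+t\lambda g\|-\|f\|}{t}=\|g\|$. Indeed, if this derivative equals $\|g\|$ then convexity gives $\|f+t\lambda g\|\ge\|f\|+t\|g\|$ for $t\ge0$, which together with the triangle inequality forces equality and hence $\|f+\lambda g\|=\|f\|+\|g\|$. Conversely, if $f\parallel g$ then the convex function $t\mapsto\|f+t\lambda g\|$ touches its affine majorant $t\mapsto\|f\|+t\|g\|$ at $t=0$ and $t=1$; writing the chord slope $\|g\|$ over $[0,1]$ as a convex combination of the slopes over $[0,s]$ and $[s,1]$, and using that the contact at $t=1$ makes the function differentiable there with derivative $\|g\|$, one sees the right derivative at $0$ equals $\|g\|$. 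Assume from now on $f\ne0$, the case $f=0$ being trivial.

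For (ii)$\Rightarrow$(i), I would feed this into Lemma \ref{lemma.3.02}: parallelism gives $\lambda\in\mathbb T$ with $\inf_{\varepsilon>0}\sup_{x\in M^\varepsilon_f}\mathrm{Re}\big(\lambda\,\overline{f(x)}g(x)/|f(x)|\big)=\|g\|$. Since the inner supremum is nondecreasing in $\varepsilon$ and never exceeds $\|g\|$, the infimum being $\|g\|$ forces it to equal $\|g\|$ for every $\varepsilon>0$. I would then pick, for each $k$, a point $z_k\in M^{1/k}_f$ with $\mathrm{Re}\big(\lambda\,\overline{f(z_k)}g(z_k)/|f(z_k)|\big)>\|g\|-\tfrac1k$ and set $\mu_k=\delta_{z_k}$. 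A squeeze then yields $|g(z_k)|\to\|g\|$, membership in $M^{1/k}_f$ yields $|f(z_k)|\to\|f\|$, and the defining inequality yields $\lambda\,\overline{f(z_k)}g(z_k)/|f(z_k)|\to\|g\|$; consequently $\int_\Omega\overline f g\,d\mu_k\to\|f\|\|g\|\overline\lambda$ and $\int_\Omega|f|^2d\mu_k\to\|f\|^2$, which is exactly the identity in (i). For a prescribed $\varepsilon>0$ the tail $(z_k)_{k>1/\varepsilon}$ lies in $M^\varepsilon_f$ and serves as the required sequence.

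For (i)$\Rightarrow$(ii), fix $\varepsilon>0$, take the measures $\mu_n$ supplied by (i), and define $\varphi_n\in\mathcal C_b(\Omega)^{*}$ by $\varphi_n(h)=\|f\|^{-1}\int_\Omega\overline f h\,d\mu_n$. Then $\|\varphi_n\|\le1$, while $\varphi_n(f)\to B_\varepsilon/\|f\|$ and $\varphi_n(g)\to A_\varepsilon/\|f\|$, where $A_\varepsilon,B_\varepsilon$ denote the two limits appearing in (i). A weak$^{*}$ cluster point $\varphi^{(\varepsilon)}$ of $(\varphi_n)_n$ satisfies $\varphi^{(\varepsilon)}(f)=B_\varepsilon/\|f\|$ and, using the identity in (i), $|\varphi^{(\varepsilon)}(g)|=\|g\|B_\varepsilon/\|f\|^{2}$. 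Because the $\mu_n$ live on $M^\varepsilon_f$ we have $(\|f\|-\varepsilon)^2\le B_\varepsilon\le\|f\|^2$, so $B_\varepsilon\to\|f\|^2$ as $\varepsilon\to0$. Taking a weak$^{*}$ cluster point $\varphi$ of the net $(\varphi^{(\varepsilon)})$ along $\varepsilon\to0$ produces $\varphi\in\mathbb B_{\mathcal C_b(\Omega)^{*}}$ with $\varphi(f)=\|f\|$ and $|\varphi(g)|=\|g\|$; since $\varphi(f)=\|f\|$ forces $\|\varphi\|=1$, Lemma \ref{T.1.1} gives $f\parallel g$.

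The step I expect to be the crux is the production of honest probability measures on $\Omega$ in (ii)$\Rightarrow$(i). The obvious attempt---Dirac masses at near-maximizers of $\mathrm{Re}(\lambda\overline f g/|f|)$ on a fixed $M^\varepsilon_f$---breaks down, because such maximizers may have $|f|$ bounded away from $\|f\|$, and then the two limits in (i) fail to balance. The remedy is the monotonicity observation that the supremum stays equal to $\|g\|$ on every smaller set $M^{1/k}_f$, which lets one capture points where $|f|\to\|f\|$ and the phase alignment hold at once. Dually, in (i)$\Rightarrow$(ii) a single $\varepsilon$ only yields a functional attaining the fraction $B_\varepsilon/\|f\|^2$ of $\|f\|\|g\|$, so the quantifier ``for every $\varepsilon$'' is genuinely needed to let $\varepsilon\to0$ and recover a norming functional.
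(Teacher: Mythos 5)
Your proof is correct, and it takes a genuinely different route from the paper's in both directions. For (ii)$\Rightarrow$(i) the paper first invokes Lemma \ref{T.1.1} to obtain a norming functional, differentiates $t \mapsto \big\|f + t e^{i\theta}\big(\|g\|f - \lambda\|f\|g\big)\big\|$ in every direction $\theta$, and uses Lemma \ref{lemma.3.02} to conclude that the origin lies in the closed convex hull of the values of $e^{-i\arg f}\big(\|g\|f - \lambda\|f\|g\big)$ on $M^\varepsilon_f$; the measures $\mu_n$ are then finitely supported measures witnessing this, followed by an integral estimate balancing the two limits. You instead establish the elementary convexity fact that $f \parallel g$ if and only if $\lim_{t\to 0^+}\frac{\|f+t\lambda g\|-\|f\|}{t}=\|g\|$ for some $\lambda\in\mathbb{T}$ (your argument is sound: a convex function lying below its affine majorant on $[0,\infty)$ and touching it at $t=0$ and $t=1$ is differentiable at $t=1$, which forces every chord slope over $[0,s]$ to equal $\|g\|$), feed $\lambda g$ directly into Lemma \ref{lemma.3.02}, and use monotonicity of the suprema to extract Dirac measures $\delta_{z_k}$ with $z_k\in M^{1/k}_f$. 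This is more constructive than the paper: it avoids both Hahn--Banach and the convex-hull/half-plane argument, it shows that point masses suffice in (i), and a single sequence serves every $\varepsilon$ at once. For (i)$\Rightarrow$(ii) the paper defines a functional abstractly on $\mathrm{span}\big\{f,\ \|g\|f-\lambda\|f\|g\big\}$ (implicitly assuming this definition is consistent, i.e.\ linear independence) and extends it by Hahn--Banach, letting $\varepsilon\to 0$ and $n\to\infty$ simultaneously inside one estimate; you use the concrete functionals $h\mapsto \|f\|^{-1}\int_\Omega \overline{f(x)}h(x)\,d\mu_n(x)$, which are visibly of norm at most one, and take weak$^*$ cluster points twice (first in $n$, then in $\varepsilon$), which cleanly separates the two limits before applying the same Lemma \ref{T.1.1}. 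Both arguments are valid; the paper's template has the advantage of transferring verbatim to the compact case (Theorem \ref{th.3}), while yours is more elementary and yields a sharper form of condition (i).
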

\begin{proof}
(i)$\Rightarrow$(ii) Suppose (i) holds. Let $\varepsilon >0$.
So, there exist a sequence of probability measures $\mu_n$
concentrated at $M^\varepsilon_f$ and $\lambda\in\mathbb{T}$ such that
$$\|g\|\lim_{n\rightarrow\infty}\int_\Omega |f(x)|^2d\mu_n(x)
= \lambda \|f\|\lim_{n\rightarrow\infty}\int_\Omega \overline{f(x)}g(x)d\mu_n(x),$$
and hence
\begin{align}\label{e.3.10}
\lim_{n\rightarrow\infty}\int_\Omega \overline{f(x)}\Big(\|g\|f(x) - \lambda \|f\|g(x)\Big)d\mu_n(x) = 0.
\end{align}
Let us now define a linear functional $\varphi: \mbox{span}\big\{f, (\|g\|f - \lambda \|f\|g)\big\} \longrightarrow \mathbb{C}$
by setting
\begin{align*}
\varphi\Big(\alpha f + \beta\big(\|g\|f - \lambda \|f\|g\big)\Big)
= \alpha \|f\| \qquad (\alpha, \beta \in \mathbb{C}).
\end{align*}
We have
\begin{align*}
\Big\|\alpha f + &\beta\big(\|g\|f - \lambda \|f\|g\big)\Big\|^2
\\ &= \sup_{x\in \Omega}\Big|\alpha f(x) + \beta\big(\|g\|f(x) - \lambda \|f\|g(x)\big)\Big|^2
\\& = \sup_{x\in \Omega}\Big(|\alpha|^2|f(x)|^2 +
2\mbox{Re}\big[\overline{\alpha}\beta\overline{f(x)}\big(\|g\|f(x) - \lambda \|f\|g(x)\big)\big]
\\&\qquad\qquad \qquad\qquad\qquad\qquad \qquad\qquad+ |\beta|^2\big|\|g\|f(x) - \lambda \|f\|g(x)\big|^2\Big)
\\ & \geq \Big|\int_\Omega\Big(|\alpha|^2|f(x)|^2 +
2\mbox{Re}\big[\overline{\alpha}\beta\overline{f(x)}\big(\|g\|f(x) - \lambda \|f\|g(x)\big)\big]
\\&\qquad\qquad \qquad\qquad\qquad\qquad \qquad\qquad
+ |\beta|^2\big|\|g\|f(x) - \lambda \|f\|g(x)\big|^2\Big)d\mu_n(x)\Big|
\\ & \geq |\alpha|^2\int_\Omega|f(x)|^2 d\mu_n(x)+
2\mbox{Re}\big[\overline{\alpha}\beta\int_\Omega\overline{f(x)}\big(\|g\|f(x) - \lambda \|f\|g(x)\big)d\mu_n(x)\big]
\\& \qquad \qquad \qquad \qquad \qquad \qquad \big(\mbox{since $\int_\Omega |\beta|^2\big|\|g\|f(x) - \lambda \|f\|g(x)\big|^2d\mu_n(x)\geq 0$}\big)
\\ &\geq |\alpha|^2(\|f\| - \varepsilon)^2 +
2\mbox{Re}\big[\overline{\alpha}\beta\int_\Omega\overline{f(x)}\big(\|g\|f(x) - \lambda \|f\|g(x)\big)d\mu_n(x)\big].
\\& \qquad \qquad \qquad \quad \quad \quad\big(\mbox{since $\mu_n$ is a probability measure concentrated at $M^\varepsilon_f$}\big)
\end{align*}
Thus
\begin{align}\label{e.3.11}
\Big\|\alpha f + &\beta\big(\|g\|f - \lambda \|f\|g\big)\Big\|^2\nonumber
\\ &\geq |\alpha|^2(\|f\| - \varepsilon)^2
+ 2\mbox{Re}\big[\overline{\alpha}\beta\int_\Omega\overline{f(x)}\big(\|g\|f(x) - \lambda \|f\|g(x)\big)d\mu_n(x)\big].
\end{align}
Letting $\varepsilon \rightarrow 0^+$ and $n \longrightarrow \infty$ in (\ref{e.3.11}), then by (\ref{e.3.10}) we obtain
\begin{align*}
\Big\|\alpha f + \beta\big(\|g\|f - \lambda \|f\|g\big)\Big\|\geq |\alpha|\|f\|.
\end{align*}
Hence
$\Big|\varphi \Big(\alpha f + \beta\big(\|g\|f - \lambda \|f\|g\big)\Big)\Big|\leq \Big\|\alpha f
+ \beta\big(\|g\|f - \lambda \|f\|g\big)\Big\|$
and $\varphi(f) = \|f\|$. Thus $\|\varphi\| = 1$. Therefore, the Hahn-Banach theorem extends $\varphi$ to a linear functional
$\widetilde{\varphi}$ on $\mathcal{C}_b(\Omega)$, with $\|\widetilde{\varphi}\| = 1$.
Since $\widetilde{\varphi}(f) = \|f\|$ and $\widetilde{\varphi}\big(\|g\|f - \lambda \|f\|g\big) = 0$,
we get $\widetilde{\varphi}(\lambda g) = \|g\|$, hence $|\widetilde{\varphi}(g)| = \|g\|$.
Now, by Lemma \ref{T.1.1}, we conclude that $f\parallel g$.

(ii)$\Rightarrow$(i) Let $f\parallel g$. By Lemma \ref{T.1.1},
there exists a norm one linear functional $\varphi$ over $\mathcal{C}_b(\Omega)$
such that $\varphi(f) = \|f\|$ and $|\varphi(g)| = \|g\|$.
So, there exists $\lambda\in\mathbb{T}$ such that $\varphi(\lambda g) = \|g\|$.
From $\|\varphi\| = 1, \varphi(f) = \|f\|$ and $\varphi(\lambda g) = \|g\|$ it follows that
\begin{align*}
\frac{\Big\|f + re^{i\theta}\big(\|g\|f - \lambda \|f\|g\big)\Big\| - \|f\|}{r}
&\geq \frac{\Big|\varphi\Big(f + re^{i\theta}\big(\|g\|f - \lambda \|f\|g\big)\Big)\Big| - \|f\|}{r}
\\& = \frac{\Big|\varphi(f) + re^{i\theta}\|g\|\varphi(f) - re^{i\theta}\|f\|\varphi(\lambda g)\Big| - \|f\|}{r}
\\& =  \frac{\Big|\|f\| + re^{i\theta}\|g\|\|f\| - re^{i\theta}\|f\|\|g\|\Big| - \|f\|}{r} = 0
\end{align*}
for all $r>0$ and all $\theta\in [0, 2\pi)$. Hence, by Lemma \ref{lemma.3.02}, we get
$$\inf_{\varepsilon>0}\sup_{x\in M^\varepsilon_f}
\mbox{Re}\Big(e^{i\theta} e^{-i\arg (f(x))}\big(\|g\|f(x) - \lambda \|f\|g(x)\big)\Big)\geq 0$$
for all $\theta\in [0, 2\pi)$.
Thus for all $\varepsilon>0$ the set
$$K_\varepsilon := \{e^{-i\arg (f(x))}\big(\|g\|f(x) - \lambda \|f\|g(x)\big):\,\,x\in M^\varepsilon_f\}$$
contains at least one element with nonnegative real part under all rotations around the origin.
Hence the values of the function $\|g\|f - \lambda \|f\|g$ on $M^\varepsilon_f$
are not contained in an open half plane with boundary that contains the origin.
So, for all $\varepsilon>0$ the closed convex hull
of the set $K_\varepsilon$ contains the origin. The convex hull of $K_\varepsilon$
consists of points of the form
$\int_\Omega e^{-i\arg (f(x))}\big(\|g\|f(x) - \lambda \|f\|g(x)\big)d\mu_\varepsilon(x)$,
where $\mu_\varepsilon$ is a probability measure supported on a finite subset of
$M^\varepsilon_f$ (see \cite{R}, chap. 3). Let $n_0\in \mathbb{N}$ and $\frac{1}{n_0}< \varepsilon$.
Thus for every $n\geq n_0$, there is a
probability measure $\mu_n$ concentrated at $M^\varepsilon_f$ such that
\begin{align}\label{e.3.12}
\Big|\int_\Omega e^{-i\arg (f(x))}\big(\|g\|f(x) - \lambda \|f\|g(x)\big)d\mu_n(x)\Big|< \frac{1}{n}.
\end{align}
We have
\begin{align}\label{e.3.121000}
\Big|\int_\Omega &\overline{f(x)}\Big(\|g\|f(x) - \lambda \|f\|g(x)\Big)d\mu_n(x)\Big|\nonumber
\\& = \Big|\int_\Omega \Big(\overline{f(x)} - \|f\|e^{-i\arg (f(x))}\Big)
\Big(\|g\|f(x) - \lambda \|f\|g(x)\Big)d\mu_n(x)\nonumber
\\&\qquad \qquad \qquad \qquad + \int_\Omega \|f\|e^{-i\arg (f(x))}\Big(\|g\|f(x) -
\lambda \|f\|g(x)\Big)d\mu_n(x)\Big|\nonumber
\\& \leq \Big|\int_\Omega \Big(\overline{f(x)} - \|f\|e^{-i\arg (f(x))}\Big)\Big(\|g\|f(x) -
\lambda \|f\|g(x)\Big)d\mu_n(x)\Big|
\\&\qquad \qquad \qquad \qquad \qquad \qquad
+ \|f\|\Big|\int_\Omega e^{-i\arg (f(x))}\big(\|g\|f(x) - \lambda \|f\|g(x)\big)d\mu_n(x)\Big|.\nonumber
\end{align}
Furthermore, since $\|f\| - \frac{1}{n} \leq |f(x)| \leq \|f\|$
for all $x\in M^\varepsilon_f$ we have
\begin{align}\label{e.3.121001}
\Big|\int_\Omega \Big(\overline{f(x)} - \|f\|e^{-i\arg (f(x))}\Big)&\Big(\|g\|f(x) -
\lambda \|f\|g(x)\Big)d\mu_n(x)\Big| \nonumber
\\&\leq \frac{1}{n} \int_\Omega \Big|\|g\|f(x) - \lambda \|f\|g(x)\Big|d\mu_n(x) \nonumber
\\& \leq \frac{1}{n} \Big(\|g\|\int_\Omega |f(x)|d\mu_n(x) + \|f\|\int_\Omega |g(x)|d\mu_n(x) \Big)\nonumber
\\& \leq \frac{2}{n} \|f\|\|g\|.
\end{align}
By (\ref{e.3.12}), (\ref{e.3.121000}) and (\ref{e.3.121001}) we get
\begin{align}\label{e.3.121}
\Big|\int_\Omega &\overline{f(x)}\Big(\|g\|f(x) - \lambda \|f\|g(x)\Big)d\mu_n(x)\Big|
\leq \frac{2}{n} \|f\|\|g\| + \|f\|\frac{1}{n}.
\end{align}
Taking $\displaystyle{\lim_{n\rightarrow\infty}}$ in (\ref{e.3.121}), we obtain
$$\lim_{n\rightarrow\infty}\int_\Omega \overline{f(x)}\Big(\|g\|f(x)
- \lambda \|f\|g(x)\Big)d\mu_n(x) = 0,$$
and hence
$$\|f\|\Big|\lim_{n\rightarrow\infty}\int_\Omega \overline{f(x)}g(x)d\mu_n(x)\Big|
= \|g\|\lim_{n\rightarrow\infty}\int_\Omega |f(x)|^2d\mu_n(x).$$
\end{proof}
Next, we present a characterization of the norm--parallelism
for continuous functions on a compact Hausdorff space $K$.
We will need the following lemma.
\begin{lemma}\cite[Theorem 3.1]{K}\label{lemma.3.029}
Let $K$ be a compact Hausdorff space and let $f, g\in \mathcal{C}(K)$. Then
$$\lim_{t\rightarrow 0^{+}}\frac{\|f + tg\| - \|f\|}{t}
= \max_{x\in M_f}\mbox{Re}\big(e^{-i\arg (f(x))}g(x)\big),$$
where $M_f = \{x\in K: \, |f(x)| = \|f\|\}$.
\end{lemma}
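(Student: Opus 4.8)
The plan is to deduce this directly from Lemma \ref{lemma.3.02}. Since a compact Hausdorff space is in particular locally compact, and every continuous function on a compact space is bounded, we have $\mathcal{C}(K) = \mathcal{C}_b(K)$, so Lemma \ref{lemma.3.02} applies verbatim with $\Omega = K$ and yields
$$\lim_{t\rightarrow 0^{+}}\frac{\|f + tg\| - \|f\|}{t} = \inf_{\varepsilon>0}\sup_{x\in M^\varepsilon_f}\mbox{Re}\big(e^{-i\arg (f(x))}g(x)\big).$$
The entire lemma therefore reduces to the purely topological identity
$$\inf_{\varepsilon>0}\sup_{x\in M^\varepsilon_f} h(x) = \max_{x\in M_f} h(x), \qquad h(x) := \mbox{Re}\big(e^{-i\arg(f(x))}g(x)\big),$$
where I may assume $f \neq 0$, the case $f = 0$ being trivial. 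Two preliminary observations make the right-hand side meaningful: first, $M_f = |f|^{-1}(\{\|f\|\})$ is a nonempty closed subset of the compact space $K$, hence itself compact; second, on every set $M^\varepsilon_f$ with $\varepsilon < \|f\|$ one has $|f| \geq \|f\| - \varepsilon > 0$, so $e^{-i\arg(f(x))} = \overline{f(x)}/|f(x)|$ is continuous there and $h$ is a genuine continuous function on a neighbourhood of $M_f$. In particular the maximum on the right is attained.

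The inequality $\max_{x\in M_f} h(x) \leq \inf_{\varepsilon>0}\sup_{x\in M^\varepsilon_f} h(x)$ is immediate, since $M_f \subseteq M^\varepsilon_f$ forces $\max_{M_f} h \leq \sup_{M^\varepsilon_f} h$ for every $\varepsilon > 0$. The reverse inequality is the substantive point and is where I expect the only real work. I would argue by contradiction: if $\inf_{\varepsilon>0}\sup_{M^\varepsilon_f} h > \max_{M_f} h + \delta$ for some $\delta > 0$, then for each $\varepsilon > 0$ one can choose $x_\varepsilon \in M^\varepsilon_f$ with $h(x_\varepsilon) > \max_{M_f} h + \delta$. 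Viewing $(x_\varepsilon)$ as a net directed by decreasing $\varepsilon$ in the compact space $K$, I pass to a convergent subnet $x_{\varepsilon_\alpha} \to x_0$. The bound $|f(x_{\varepsilon_\alpha})| \geq \|f\| - \varepsilon_\alpha$ together with continuity of $|f|$ forces $|f(x_0)| = \|f\|$, i.e. $x_0 \in M_f$; and continuity of $h$ near $M_f$ gives $h(x_0) = \lim_\alpha h(x_{\varepsilon_\alpha}) \geq \max_{M_f} h + \delta$, contradicting $x_0 \in M_f$. The main obstacle is handling this limit correctly in a possibly non-metrizable compact Hausdorff space, which is exactly why I would work with nets rather than sequences and lean on the continuity of $h$ on the open set $\{f \neq 0\} \supseteq M_f$.

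Alternatively, one could bypass Lemma \ref{lemma.3.02} and prove the formula directly. The map $t \mapsto \|f + tg\|$ is convex, so the one-sided limit exists and equals $\inf_{t>0}\frac{\|f+tg\|-\|f\|}{t}$. The lower bound $\geq h(x_0)$ then follows by evaluating at any fixed $x_0 \in M_f$ and expanding $|\,\|f\| + t e^{-i\arg(f(x_0))}g(x_0)\,|$ to first order in $t$, while the upper bound comes from writing $\|f+tg\|^2 = |f(x_t) + t g(x_t)|^2$ at a maximizer $x_t$, using $|f(x_t)| \leq \|f\|$ to discard the leading term, and extracting a subnet with $x_t \to x_0 \in M_f$. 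Either route isolates the same compactness step as the heart of the argument, so I would present the first, shorter one and reserve the direct computation as a remark.
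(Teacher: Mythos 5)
Your argument is correct, but there is nothing in the paper to compare it against: the paper does not prove this lemma, it simply quotes it from \cite[Theorem 3.1]{K}, the same citation given for Lemma \ref{lemma.3.02}. What you have supplied is therefore genuinely new relative to the paper, namely a derivation of the compact case from the locally compact case, whose entire content is the collapse of $\inf_{\varepsilon>0}\sup_{x\in M^\varepsilon_f}$ to $\max_{x\in M_f}$. Your treatment of that step is sound: $M_f$ is nonempty and compact, $h(x)=\mbox{Re}\big(\overline{f(x)}g(x)/|f(x)|\big)$ is continuous on the open set $\{f\neq 0\}\supseteq M_f$ so the maximum is attained, any cluster point $x_0$ of the net $(x_\varepsilon)$ satisfies $|f(x_0)|\geq \|f\|-\varepsilon_0$ for every $\varepsilon_0>0$ and hence lies in $M_f$, and the strict gap $\delta$ survives passage to the limit along the subnet, giving the contradiction; the use of nets rather than sequences is exactly right for a non-metrizable $K$. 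One caveat: the case $f=0$ is not ``trivial'' --- with any fixed convention for $\arg(0)$ the right-hand side reads $\max_{x\in K}\mbox{Re}\big(e^{-i\arg 0}g(x)\big)$, which for $g\equiv -1$ gives $-1$ rather than the correct value $\|g\|=1$; the identity for $f=0$ is really a defect inherited from the statement itself (and from Lemma \ref{lemma.3.02}), so the honest phrasing is that $f\neq 0$ is a standing assumption of both lemmas, not a case you dispose of. Finally, your alternative direct argument (convexity of $t\mapsto\|f+tg\|$, the pointwise lower bound at a fixed $x_0\in M_f$, and a maximizing net $x_t\to x_0\in M_f$ for the upper bound) is also correct, and it has a structural advantage worth noting: it does not rest on Lemma \ref{lemma.3.02}, which in this paper is itself only cited, never proved, so the direct route is the only one of the two that is self-contained.
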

\begin{theorem}\label{th.3}
Let $K$ be a compact Hausdorff space and let $f, g\in \mathcal{C}(K)$.
Then the following statements are equivalent:
\begin{itemize}
\item[(i)] There exists a probability measure $\mu$
with support contained in the norm attaining set $M_f$ such that
$$\Big|\int_K \overline{f(x)}g(x)d\mu(x)\Big| = \|f\|\,\|g\|.$$
\item[(ii)] $f\parallel g$.
\end{itemize}
\end{theorem}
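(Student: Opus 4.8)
The plan is to prove both implications through Lemma \ref{T.1.1}, which reduces norm--parallelism to the existence of a norm one functional $\varphi$ with $\varphi(f)=\|f\|$ and $|\varphi(g)|=\|g\|$, exactly as in Theorem \ref{th.2}; the essential simplification in the compact case is that $M_f$ is compact, so the infimum--over--$\varepsilon$ of suprema in Lemma \ref{lemma.3.02} collapses to a genuine maximum over $M_f$ (Lemma \ref{lemma.3.029}) and one obtains a single probability measure rather than a sequence. We may assume $f\neq 0$, since for $f=0$ both statements hold trivially.

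For (i)$\Rightarrow$(ii), rather than imitating the Hahn--Banach argument of Theorem \ref{th.2} I would build the functional directly from $\mu$. Set
\[
\varphi(h)=\frac{1}{\|f\|}\int_K \overline{f(x)}\,h(x)\,d\mu(x)\qquad (h\in\mathcal{C}(K)).
\]
Since $\mathrm{supp}\,\mu\subseteq M_f$ we have $|f(x)|=\|f\|$ for $\mu$--almost every $x$, whence $\varphi(f)=\|f\|^{-1}\int_K |f|^2\,d\mu=\|f\|$ and, by hypothesis, $|\varphi(g)|=\|f\|^{-1}\big|\int_K\overline{f}g\,d\mu\big|=\|g\|$. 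The same pointwise identity gives $|\varphi(h)|\le \int_K |h|\,d\mu\le\|h\|$ for every $h$, so $\|\varphi\|\le 1$, and $\varphi(f)=\|f\|$ forces $\|\varphi\|=1$. Lemma \ref{T.1.1} then yields $f\parallel g$.

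For (ii)$\Rightarrow$(i) I would follow the skeleton of the corresponding direction in Theorem \ref{th.2}. Lemma \ref{T.1.1} provides a norm one $\varphi$ and $\lambda\in\mathbb{T}$ with $\varphi(f)=\|f\|$ and $\varphi(\lambda g)=\|g\|$; testing the norm against $\varphi$ shows that the one--sided derivative of $r\mapsto\|f+re^{i\theta}(\|g\|f-\lambda\|f\|g)\|$ at $0^+$ is nonnegative for every $\theta\in[0,2\pi)$. Applying Lemma \ref{lemma.3.029} converts this into
\[
\max_{x\in M_f}\mathrm{Re}\Big(e^{i\theta}e^{-i\arg f(x)}\big(\|g\|f(x)-\lambda\|f\|g(x)\big)\Big)\ge 0\qquad(\theta\in[0,2\pi)).
\]
On $M_f$ one has $e^{-i\arg f(x)}=\overline{f(x)}/\|f\|$ and $|f(x)|^2=\|f\|^2$, so the bracketed expression simplifies to $\|f\|\,\|g\|-\lambda\,\overline{f(x)}g(x)$. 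Hence the compact set
\[
K_0:=\big\{\,\|f\|\,\|g\|-\lambda\,\overline{f(x)}g(x)\ :\ x\in M_f\,\big\}\subseteq\mathbb{C}
\]
meets every closed half--plane whose boundary passes through the origin; equivalently, it lies in no open such half--plane.

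The crux is then the elementary separation fact that a compact set not contained in any open half--plane bounded by a line through $0$ must have $0$ in its convex hull: if $0\notin\mathrm{conv}(K_0)$, strict separation of the point $0$ from the compact convex set $\mathrm{conv}(K_0)$ produces a direction along which every element of $K_0$ has negative real part, contradicting the displayed inequality. Because $M_f$, and hence $K_0$, are compact, $\mathrm{conv}(K_0)$ is already compact (so closed), and by Carath\'eodory's theorem $0$ is a convex combination of finitely many points of $K_0$; this is exactly a finitely supported probability measure $\mu$ on $M_f$ with $\int_K\big(\|f\|\,\|g\|-\lambda\,\overline{f(x)}g(x)\big)\,d\mu(x)=0$, which rearranges to $\lambda\int_K\overline{f}g\,d\mu=\|f\|\,\|g\|$ and therefore $\big|\int_K\overline{f}g\,d\mu\big|=\|f\|\,\|g\|$. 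I expect this passage --- from the half--plane condition to an honest measure on $M_f$ --- to be the main point requiring care; here compactness of $M_f$ is precisely what upgrades the sequence of measures of Theorem \ref{th.2} to the single measure claimed, and one should also record that $x\mapsto\overline{f(x)}g(x)$ is continuous on $M_f$ with $\arg f$ well defined there since $|f|=\|f\|>0$.
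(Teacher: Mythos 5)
Your proof is correct, but both implications are implemented genuinely differently from the paper's. For (i)$\Rightarrow$(ii) the paper never integrates against $\mu$ to build the functional: it defines $\varphi$ only on $\mbox{span}\{f,\ \|g\|f-\lambda\|f\|g\}$ by $\varphi\big(\alpha f+\beta(\|g\|f-\lambda\|f\|g)\big)=\alpha\|f\|$, verifies $\|\varphi\|=1$ using the hypothesis in the form $\int_K\overline{f}\big(\|g\|f-\lambda\|f\|g\big)d\mu=0$, and then extends by Hahn--Banach before invoking Lemma \ref{T.1.1}; your explicit functional $h\mapsto\|f\|^{-1}\int_K\overline{f}h\,d\mu$ reaches the same conclusion more economically, since $\mathrm{supp}\,\mu\subseteq M_f$ gives $|f|=\|f\|$ $\mu$-a.e., hence $\varphi(f)=\|f\|$, $|\varphi(g)|=\|g\|$ and $\|\varphi\|\leq 1$ at once, with no extension needed. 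For (ii)$\Rightarrow$(i) the paper derives the same half-plane condition from Lemma \ref{lemma.3.029}, but then only produces a sequence $\mu_n$ of finitely supported probability measures on $M_f$ with $\int_K\overline{f}\big(\|g\|f-\lambda\|f\|g\big)d\mu_n\to 0$ and extracts a weak-$*$ cluster point $\mu$ via Banach--Alaoglu (using closedness of $M_f$ to keep $\mathrm{supp}\,\mu\subseteq M_f$); your route --- compactness of $K_0$, strict separation to force $0\in\mathrm{conv}(K_0)$, and Carath\'eodory to realize $0$ as a finite convex combination --- produces an exact, finitely supported measure in one step and avoids the limiting argument entirely. Your version is also more careful at a point the paper elides: the paper states the max-inequality only for $\theta=0$ yet uses the full rotational statement to place $0$ in the (closed) convex hull, whereas you record the rotation by $e^{i\theta}$ for all $\theta$ explicitly, and you note why $\arg f$ is well defined on $M_f$. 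The paper's proof has the virtue of running parallel to its Theorem \ref{th.2} (uniform exposition across the locally compact and compact cases); yours exploits compactness of $M_f$ more directly and is self-contained at the key step.
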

\begin{proof}
We proceed as in the proof of Theorem \ref{th.2}.

(i)$\Rightarrow$(ii) Suppose (i) holds. So, there exists $\lambda\in\mathbb{T}$ such that
\begin{align*}
\int_K \overline{f(x)}\Big(\|g\|f(x) - \lambda \|f\|g(x)\Big)d\mu(x) = 0.
\end{align*}
Thus
\begin{align*}
\Big\|\alpha f + &\beta\big(\|g\|f - \lambda \|f\|g\big)\Big\|^2
\\ &\geq |\alpha|^2\|f\|^2 + |\beta|^2\int_{M_f}\Big|\|g\|f(x)
- \lambda \|f\|g(x)\Big|^2d\mu(x)\geq |\alpha|^2\|f\|^2,
\end{align*}
for any $\alpha, \beta \in \mathbb{C}$.
Let $\varphi: \mbox{span}\big\{f, (\|g\|f - \lambda \|f\|g)\big\} \longrightarrow \mathbb{C}$
be the linear functional defined as
\begin{align*}
\varphi\Big(\alpha f + \beta\big(\|g\|f - \lambda \|f\|g\big)\Big)
= \alpha \|f\| \qquad (\alpha, \beta \in \mathbb{C}).
\end{align*}
Hence $\varphi(f) = \|f\|, \varphi(\lambda g) = \|g\|$ and $\|\varphi\| = 1$.
By the Hahn-Banach theorem, $\varphi$
extends to a linear functional $\widetilde{\varphi}$ on $\mathcal{C}(K)$,
of the same norm. Since $\widetilde{\varphi}(f) = \|f\|$ and $|\widetilde{\varphi}(g)| = \|g\|$,
Lemma \ref{T.1.1} yields $f\parallel g$.

(ii)$\Rightarrow$(i) Let $f\parallel g$. By Lemma \ref{T.1.1},
there exist $\lambda\in\mathbb{T}$ and a norm one linear functional $\varphi$ over
$\mathcal{C}(K)$ such that $\varphi(f) = \|f\|$ and $\varphi(\lambda g) = \|g\|$.
Hence by Lemma \ref{lemma.3.029}, we get
$$\max_{x\in M_f}\mbox{Re}\Big(e^{-i\arg (f(x))}\big(\|g\|f(x) - \lambda \|f\|g(x)\big)\Big)\geq 0.$$
So, the convex hull of the set $\{\overline{f(x)}\big(\|g\|f(x)
- \lambda \|f\|g(x)\big): \,\,x\in M_f\}$ consists of points of the form
$\int_K \overline{f(x)}\big(\|g\|f(x) - \lambda \|f\|g(x)\big)d\mu(x)$,
where $\mu$ is a probability measure supported on a finite subset of $M_f$.
Then there is a sequence $\mu_n$ of probability measures such that
\begin{align*}
\lim_{n\rightarrow\infty}\int_K \overline{f(x)}\big(\|g\|f(x)
- \lambda \|f\|g(x)\big)d\mu_n(x) = 0.
\end{align*}
By the Banach-Alaoglu compactness theorem in dual space, there is a
probability measure $\mu$ such that $\displaystyle{\lim_{i\rightarrow\infty}\mu_{n_i}} = \mu.$
Thus the support of $\mu$ is contained in $M_f$ and we obtain
$$\Big|\int_K \overline{f(x)}g(x)d\mu(x)\Big| = \|f\|\,\|g\|.$$
\end{proof}
As a consequence of Theorem \ref{th.3} we have the following result.
\begin{corollary}\label{cr.4}
Let $K$ be a compact Hausdorff space and let $f, g\in \mathcal{C}(K)$.
If $M_f = \{x_0\}$, then the following statements are equivalent:
\begin{itemize}
\item[(i)] $f\parallel g$.
\item[(ii)] $\{x_0\} \subseteq M_g$.
\end{itemize}
\end{corollary}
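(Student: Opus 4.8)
The plan is to deduce the corollary directly from Theorem \ref{th.3}, exploiting the rigidity of probability measures supported on a single point. The key observation is that when $M_f = \{x_0\}$, the only probability measure $\mu$ on $K$ whose support is contained in $M_f$ is the Dirac point mass $\delta_{x_0}$; for such a measure every integral collapses to evaluation at $x_0$, so that $\int_K \overline{f(x)}g(x)\,d\mu(x) = \overline{f(x_0)}g(x_0)$. This reduces the measure-theoretic criterion of Theorem \ref{th.3} to a pointwise identity.

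For the implication (i)$\Rightarrow$(ii), I would assume $f\parallel g$ and apply Theorem \ref{th.3} to obtain a probability measure $\mu$ supported in $M_f = \{x_0\}$ with $\big|\int_K \overline{f(x)}g(x)\,d\mu(x)\big| = \|f\|\,\|g\|$. Since $\mu = \delta_{x_0}$, this equality becomes $|f(x_0)|\,|g(x_0)| = \|f\|\,\|g\|$. Because $x_0\in M_f$ gives $|f(x_0)| = \|f\|$, dividing by $\|f\|$ yields $|g(x_0)| = \|g\|$, that is, $x_0\in M_g$. Here I would first dispose of the degenerate case $f = 0$: then $\|f\| = 0$ forces $M_f = K$, so $M_f = \{x_0\}$ means $K = \{x_0\}$ is a singleton and both statements hold trivially.

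For (ii)$\Rightarrow$(i), I would reverse the computation. Assuming $x_0\in M_g$, so that $|g(x_0)| = \|g\|$, I take $\mu = \delta_{x_0}$, which is a probability measure with support contained in $M_f = \{x_0\}$, and compute $\big|\int_K \overline{f(x)}g(x)\,d\mu(x)\big| = |f(x_0)|\,|g(x_0)| = \|f\|\,\|g\|$. Theorem \ref{th.3} then delivers $f\parallel g$.

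Since the analytic content is already carried by Theorem \ref{th.3}, there is no genuine obstacle here; the only points requiring a moment's care are the verification that the support condition forces $\mu$ to be the point mass $\delta_{x_0}$, so that the criterion degenerates to the pointwise identity $|f(x_0)|\,|g(x_0)| = \|f\|\,\|g\|$, and the bookkeeping needed to rule out $\|f\| = 0$.
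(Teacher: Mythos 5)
Your proposal is correct and follows exactly the route the paper intends: the paper states this corollary without proof as an immediate consequence of Theorem \ref{th.3}, and your argument---identifying the unique probability measure supported in $\{x_0\}$ as the Dirac mass $\delta_{x_0}$, collapsing the integral criterion to $|f(x_0)|\,|g(x_0)| = \|f\|\,\|g\|$, and handling the degenerate case $f=0$---is precisely the omitted verification.
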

We closed this paper with the following equivalence theorem.
More precisely, we state a characterization of the norm--parallelism for
uniformly continuous $X$ valued functions on $\mathbb{B}_X$ to the identity function.
Note that since $\mathbb{B}_X$ is convex and bounded,
then every function in $\mathcal{C}_u(\mathbb{B}_X,X)$ is also bounded.
Before stating our result, let us quote a result from \cite{C.K}.
\begin{lemma}\cite[Corollary 2.4]{C.K}\label{lemma.5.02}
Let $X$ be a Banach space. Let $0< \theta < 2$ and suppose
$y\in \mathbb{B}_X$ and $y^*\in\mathbb{B}_{X^*}$ satisfy
$\mbox{Re}y^*(y) > 1 - \theta$. Then, there are $z\in \mathbb{S}_X$
and $z^*\in \mathbb{S}_{X^*}$ such that
$$z^*(z) = 1, \quad \|y - z\|< \sqrt{2\theta} \quad \mbox{and} \quad \|y^* - z^*\| < \sqrt{2\theta}.$$
\end{lemma}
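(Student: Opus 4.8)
The plan is to prove this as an instance of the Bishop--Phelps--Bollob\'as theorem, via the classical Bishop--Phelps cone together with a Zorn's-lemma maximality argument, and then to extract the sharp constant by a careful analysis of the separating functional. Throughout set $\varepsilon=\sqrt{2\theta}$; I first treat the case $\|y^*\|=1$ (the general case $\|y^*\|\le 1$ reduces to it, since $\mathrm{Re}\,y^*(y)>1-\theta$ already forces $\|y^*\|$ to be close to $1$). Fix the cone parameter $\rho=\sqrt{\theta/2}=\varepsilon/2$ and introduce the Bishop--Phelps cone
\[
K=\{x\in X:\ \rho\,\|x\|\le \mathrm{Re}\,y^*(x)\},
\]
a closed convex cone, together with the partial order $u\preceq v\iff v-u\in K$.

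First I would produce the point $z$. On the set $\{x\in\mathbb{B}_X:\ x\succeq y\}$ the functional $\mathrm{Re}\,y^*$ is bounded above by $1$, and the defining inequality of $K$ shows that along any $\preceq$-chain the increments satisfy $\rho\,\|u-v\|\le |\mathrm{Re}\,y^*(u)-\mathrm{Re}\,y^*(v)|$; hence every chain is norm-Cauchy and, by completeness of $X$ and closedness of $\mathbb{B}_X$, has an upper bound in the set. Zorn's lemma then yields a $\preceq$-maximal $z\in\mathbb{B}_X$ with $z\succeq y$. Since $\rho<1=\|y^*\|$, the cone $K$ has nonempty interior, so if $\|z\|<1$ one could move from $z$ a short way into $\mathrm{int}\,K$ and stay inside $\mathbb{B}_X$, contradicting maximality; thus $\|z\|=1$. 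Moreover $z\succeq y$ gives $\rho\,\|z-y\|\le \mathrm{Re}\,y^*(z)-\mathrm{Re}\,y^*(y)<1-(1-\theta)=\theta$, whence $\|z-y\|<\theta/\rho=\varepsilon$, the required estimate for the vector.

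Next I would produce $z^*$ by separation. Maximality of $z$ means $(z+K)\cap\mathbb{B}_X=\{z\}$, and since $\|z\|=1$ the convex cone $z+K$ is then disjoint from the open ball $\mathrm{int}\,\mathbb{B}_X$. The Hahn--Banach separation theorem provides a supporting hyperplane at $z$, i.e. a norm-one real functional whose complexification $z^*\in\mathbb{S}_{X^*}$ satisfies $\mathrm{Re}\,z^*(z)=\sup_{\|x\|\le 1}\mathrm{Re}\,z^*(x)=1$; as $|z^*(z)|\le 1$ this forces $z^*(z)=1$, so $z^*$ attains its norm at $z$. The same separation gives $\mathrm{Re}\,z^*(k)\ge 0$ for every $k\in K$.

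The hard part is the sharp functional estimate $\|y^*-z^*\|<\varepsilon$. Writing $g=\mathrm{Re}\,y^*$ and $h=\mathrm{Re}\,z^*$ (real-linear functionals of norm $1$), the plan is to bound $h$ on $\ker g$: for a unit vector $e$ nearly norming $g$ and any unit $x\in\ker g$, the vectors $e+tx$ lie in $K$ for all $t$ in a symmetric interval of half-length comparable to $1/\rho$, and $h(e\pm tx)\ge 0$ then forces $|h(x)|$ to be of order $\rho$. Since the distance from $h$ to $\mathbb{R}g$ equals $\|h|_{\ker g}\|$ (the isometry $(\ker g)^*\cong X^*/\mathbb{R}g$), this yields $\|h-g\|$ of order $\rho$, and finally $\|y^*-z^*\|=\|g-h\|$ through the isometry $\psi\mapsto\mathrm{Re}\,\psi$ between the complex and real duals. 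I expect the genuine difficulty to be pushing this crude ``order $\rho$'' bound down to the sharp value $\varepsilon=2\rho$: the naive symmetric-range computation leaks constants through the factors $(1-\rho)^{-1}$ and the approximate norming of $e$, so one must optimize the choice of $\rho$ and sharpen the half-plane estimate---precisely the refined Bishop--Phelps--Bollob\'as analysis carried out in \cite{C.K}. The strict hypothesis $\mathrm{Re}\,y^*(y)>1-\theta$ is what finally converts the two borderline inequalities into the strict bounds claimed.
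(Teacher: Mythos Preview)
The paper does not contain a proof of this lemma at all: it is quoted verbatim as \cite[Corollary~2.4]{C.K} and used as a black box in the proof of Theorem~\ref{th.30}. There is therefore no ``paper's own proof'' to compare your proposal against.

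Regarding the proposal itself: the route via the Bishop--Phelps cone $K=\{x:\rho\|x\|\le\mathrm{Re}\,y^*(x)\}$, a Zorn's-lemma maximal element $z\succeq y$ in $\mathbb{B}_X$, and Hahn--Banach separation of $z+K$ from $\mathrm{int}\,\mathbb{B}_X$ is exactly the classical machinery behind Bishop--Phelps--Bollob\'as, and your vector estimate $\|y-z\|<\theta/\rho=\sqrt{2\theta}$ is clean and correct. Two points deserve flagging, however. First, the reduction ``$\|y^*\|\le 1\Rightarrow\|y^*\|=1$'' is not as innocent as you suggest: for $\theta$ near $2$ the hypothesis $\mathrm{Re}\,y^*(y)>1-\theta$ does \emph{not} force $\|y^*\|$ close to $1$, and normalising $y^*$ introduces an extra term $\|y^*-y^*/\|y^*\|\|$ that must be absorbed into the final bound. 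Second, and more seriously, you do not actually prove the functional estimate $\|y^*-z^*\|<\sqrt{2\theta}$: you sketch why something of order $\rho$ should hold, then explicitly defer the sharp constant $2\rho$ to \cite{C.K}. Since the entire content of the lemma \emph{is} that sharp constant (the qualitative statement being the original Bishop--Phelps--Bollob\'as theorem), what you have written is a plan that points to the reference rather than an independent proof.
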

\begin{theorem}\label{th.30}
Let $X$ be a Banach space and let $f\in\mathcal{C}_u(\mathbb{B}_X,X)$.
Then the following statements are equivalent:
\begin{itemize}
\item[(i)] $v(f) = \|f\|$.
\item[(ii)] $f\parallel Id$,
\end{itemize}
where $Id$ stands for the identity function.
\end{theorem}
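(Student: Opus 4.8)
The plan is to argue both implications directly from the definitions, bypassing the functional characterization of Lemma \ref{T.1.1} (whose use here would require describing the dual of $\mathcal{C}_u(\mathbb{B}_X,X)$, which is unwieldy). Two facts frame everything: $\|Id\| = \sup_{x\in\mathbb{B}_X}\|x\| = 1$, and $v(f)\le\|f\|$ always, since $|x^*(f(x))|\le\|f(x)\|\le\|f\|$ whenever $\|x^*\|=1$. Hence for (i)$\Rightarrow$(ii) it suffices to produce one $\lambda\in\mathbb{T}$ with $\|f+\lambda Id\|\ge\|f\|+1$, and for (ii)$\Rightarrow$(i) it suffices to produce \emph{admissible} pairs $(z,z^*)$ (meaning $z\in\mathbb{S}_X$, $z^*\in\mathbb{S}_{X^*}$, $z^*(z)=1$) with $|z^*(f(z))|$ arbitrarily close to $\|f\|$.

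For (i)$\Rightarrow$(ii), I would start from $v(f)=\|f\|$ and select $x_n\in\mathbb{S}_X$, $x_n^*\in\mathbb{S}_{X^*}$ with $x_n^*(x_n)=1$ and $|x_n^*(f(x_n))|\to\|f\|$. Writing $x_n^*(f(x_n)) = |x_n^*(f(x_n))|\,e^{i\theta_n}$ and invoking compactness of $\mathbb{T}$, I pass to a subsequence with $e^{i\theta_n}\to\lambda$ for some $\lambda\in\mathbb{T}$. Then for each fixed $n$, $\|f+\lambda Id\|\ge\|f(x_n)+\lambda x_n\|\ge|x_n^*(f(x_n))+\lambda x_n^*(x_n)| = |x_n^*(f(x_n))+\lambda|$, and along the subsequence the right-hand side tends to $\big|\,\|f\|\lambda+\lambda\,\big| = \|f\|+1$. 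Together with the trivial estimate $\|f+\lambda Id\|\le\|f\|+1$ this yields $f\parallel Id$. This direction uses neither uniform continuity nor Lemma \ref{lemma.5.02}; the only subtlety is choosing a single $\lambda$ valid for all $n$, which compactness of $\mathbb{T}$ supplies.

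For (ii)$\Rightarrow$(i), I would fix $\lambda\in\mathbb{T}$ with $\|f+\lambda Id\|=\|f\|+1$ and pick $x_n\in\mathbb{B}_X$ with $\|f(x_n)+\lambda x_n\|\to\|f\|+1$. From $\|f(x_n)+\lambda x_n\|\le\|f(x_n)\|+\|x_n\|\le\|f\|+1$ I read off $\|f(x_n)\|\to\|f\|$ and $\|x_n\|\to 1$. Taking norming functionals $x_n^*\in\mathbb{S}_{X^*}$ with $x_n^*(f(x_n)+\lambda x_n)=\|f(x_n)+\lambda x_n\|$ and separating real parts gives $\mathrm{Re}\,x_n^*(f(x_n))\to\|f\|$ and $\mathrm{Re}\big(\lambda x_n^*(x_n)\big)\to 1$, whence $|x_n^*(f(x_n))|\to\|f\|$. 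The crucial move is to apply the Bishop--Phelps--Bollob\'as lemma (Lemma \ref{lemma.5.02}) to the pair $\big(x_n,\,\lambda x_n^*\big)$ --- absorbing the unimodular $\lambda$ into the \emph{functional} rather than the vector, so that $\mathrm{Re}\big((\lambda x_n^*)(x_n)\big)\to 1$ --- obtaining $z_n\in\mathbb{S}_X$ and $z_n^*\in\mathbb{S}_{X^*}$ with $z_n^*(z_n)=1$, $\|x_n-z_n\|\to 0$ and $\|\lambda x_n^*-z_n^*\|\to 0$. Since $z_n$ stays near $x_n$, uniform continuity of $f$ on $\mathbb{B}_X$ gives $\|f(z_n)-f(x_n)\|\to 0$, and the estimate $|z_n^*(f(z_n))|\ge|x_n^*(f(x_n))|-\|z_n^*-\lambda x_n^*\|\,\|f\|-\|f(z_n)-f(x_n)\|$ forces $|z_n^*(f(z_n))|\to\|f\|$. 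As each $(z_n,z_n^*)$ is admissible, $v(f)\ge\|f\|$, hence $v(f)=\|f\|$.

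I expect the main obstacle to lie entirely in (ii)$\Rightarrow$(i): the parallelism hypothesis yields only \emph{approximate} data --- a vector $x_n$ whose norm merely tends to $1$ and a functional that only \emph{almost} norms it --- while the numerical radius ranges over \emph{exact} pairs $z^*(z)=1$ with $z\in\mathbb{S}_X$. Lemma \ref{lemma.5.02} is precisely the device that upgrades approximate to exact with quantitative control of the perturbation, and the delicate point is to route $\lambda$ through the functional so that the resulting vector $z_n$ remains close to $x_n$, where $f$ is already known to be large; uniform continuity then transports $f(x_n)$ to $f(z_n)$ with vanishing loss.
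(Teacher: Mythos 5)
Your proof is correct and follows essentially the same route as the paper: a direct phase--extraction estimate for (i)$\Rightarrow$(ii), and for (ii)$\Rightarrow$(i) the Bishop--Phelps--Bollob\'{a}s Lemma \ref{lemma.5.02} combined with uniform continuity of $f$, with your placement of $\lambda$ on the functional rather than on $f$ being only a cosmetic relabeling (conjugation) of the paper's argument. If anything, your write-up is slightly more careful at two points: the compactness-of-$\mathbb{T}$ step in (i)$\Rightarrow$(ii) repairs the fact that $\lambda$ a priori depends on $\varepsilon$, and your perturbation estimate correctly carries the factor $\|f\|$ on $\|z_n^*-\lambda x_n^*\|$, which the paper's corresponding inequality omits.
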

\begin{proof}
(i)$\Rightarrow$(ii) Let $v(f) = \|f\|$. For every $\varepsilon > 0$,
we may find $x\in \mathbb{S}_X$ and $x^*\in \mathbb{S}_{X^*}$ such that
$x^*(x) = 1$ and $\big|x^*(f(x))\big| > \|f\| -\varepsilon$.
Let $x^*(f(x)) = \overline{\lambda}\big|x^*(f(x))\big|$ with $\lambda\in\mathbb{T}$. We have
\begin{align*}
1 + \|f\| \geq \|Id + \lambda f\| &\geq  \|x + \lambda f(x)\|
\\& \geq \Big|x^*\big(x + \lambda f(x)\big)\Big| = \Big|x^*(x) + \lambda x^*\big(f(x)\big)\Big|
\\& = \Big|1 + \lambda \overline{\lambda}\big|x^*(f(x))\big|\Big| = 1 + \big|x^*(f(x))\big|> 1 + \|f\| -\varepsilon.
\end{align*}
Thus $$1 + \|f\| \geq \|Id + \lambda f\|> 1 + \|f\| -\varepsilon.$$
Letting $\varepsilon\rightarrow0^+$, we obtain $\|Id + \lambda f\| = 1 + \|f\| $,
or equivalently, $f\parallel Id$.

(ii)$\Rightarrow$(i) Let $f\parallel Id$. So, there exists
$\lambda\in\mathbb{T}$ such that $$\|Id + \lambda f\| = 1 + \|f\|.$$
Fix $0< \varepsilon <1$. Since $f\in\mathcal{C}_u(\mathbb{B}_X,X)$,
there exists $0 < \delta < \varepsilon$ such that
\begin{align}\label{e.3.1032}
\|y - z\|<\delta \Longrightarrow \|f(y) - f(z)\|< \varepsilon \qquad (y, z\in \mathbb{B}_X).
\end{align}
Since $1 + \|f\| = \displaystyle{\sup_{y\in \mathbb{B}_X}}\|y + \lambda f(y)\|$,
there exists $y\in \mathbb{B}_X$ such that
\begin{align*}
\|y + \lambda f(y)\|> 1 + \|f\| - \frac{\delta^2}{2}.
\end{align*}
Then we may find $y^* \in  \mathbb{S}_{X^*}$ such that
\begin{align*}
\mbox{Re}y^*\big(y + \mbox{Re}y^*(\lambda f(y)\big)\big)>1 + \|f\| - \frac{\delta^2}{2},
\end{align*}
which yields
\begin{align}\label{e.3.10327}
\mbox{Re}y^*(y)> 1 - \frac{\delta^2}{2}
\end{align}
and
\begin{align}\label{e.3.10328}
\mbox{Re}y^*(\lambda f(y)) > \|f\| - \frac{\delta^2}{2}.
\end{align}
By (\ref{e.3.10327}) and Lemma \ref{lemma.5.02}, there are
$z\in \mathbb{S}_X$ and $z^*\in \mathbb{S}_{X^*}$ such that
\begin{align}\label{e.3.10329}
z^*(z) = 1, \quad \|y - z\|< \delta \quad \mbox{and} \quad \|y^* - z^*\| < \delta.
\end{align}
So, by (\ref{e.3.10329}) and (\ref{e.3.1032}) we get
\begin{align}\label{e.3.103210}
\|f(y) - f(z)\|< \varepsilon.
\end{align}
By (\ref{e.3.10329}) and (\ref{e.3.103210}) it follows that
\begin{align*}
\Big|\mbox{Re}z^*(\lambda f(z)) - \mbox{Re}y^*(\lambda f(y))\Big|&\leq
\Big|\mbox{Re}z^*\big(\lambda f(z) - \lambda f(y)\big)\Big| + \Big|\mbox{Re}(z^* - y^*)(\lambda f(y))\Big|
\\&\leq \|\lambda f(z) - \lambda f(y)\| + \|z^* - y^*\| \leq \varepsilon + \delta,
\end{align*}
whence
\begin{align}\label{e.3.103211}
\Big|\mbox{Re}z^*(\lambda f(z)) - \mbox{Re}y^*(\lambda f(y))\Big|< \varepsilon + \delta.
\end{align}
So, by (\ref{e.3.10328}) and (\ref{e.3.103211}) we obtain
\begin{align*}
\mbox{Re}z^*(\lambda f(z))> \|f\| - \frac{\delta^2}{2} - (\varepsilon + \delta) > \|f\| - 3\varepsilon.
\end{align*}
This implies
\begin{align*}
\|f\| \geq v(f) \geq \mbox{Re}z^*(\lambda f(z))> \|f\| - 3\varepsilon.
\end{align*}
Letting $\varepsilon\rightarrow0^+$, we conclude $v(f) = \|f\|$.
\end{proof}
As an immediate consequence of Theorem \ref{th.30} we have the following result.
\begin{corollary}\label{cr.40}
Let $X$ be a Banach space and let $f\in\mathcal{C}_u(\mathbb{B}_X,X)$.
If $f\parallel Id$, then $\|f\| = \displaystyle{\sup_{x\in \mathbb{S}_X}}\|f(x)\|.$
\end{corollary}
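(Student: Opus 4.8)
The plan is to read the desired equality off Theorem~\ref{th.30} once one observes that the numerical radius $v(f)$ depends only on the values of $f$ on the unit sphere and is dominated by the supremum of $\|f(x)\|$ there. I would organize the whole argument around the chain $\|f\| = v(f) \leq \sup_{x\in\mathbb{S}_X}\|f(x)\| \leq \|f\|$ and verify that each link holds.

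First I would dispose of the easy inequality: because $\mathbb{S}_X\subseteq\mathbb{B}_X$ and $\|f\| = \sup_{x\in\mathbb{B}_X}\|f(x)\|$ is precisely the norm on $\mathcal{C}_u(\mathbb{B}_X,X)$, we immediately have $\sup_{x\in\mathbb{S}_X}\|f(x)\| \leq \|f\|$. Next, since $f\parallel Id$ by hypothesis, Theorem~\ref{th.30} supplies the identity $v(f) = \|f\|$. It then remains to bound $v(f)$ from above by $\sup_{x\in\mathbb{S}_X}\|f(x)\|$. To this end I would take an arbitrary admissible pair entering the definition of $v(f)$, that is $x\in X$ and $x^*\in X^*$ with $\|x^*\| = x^*(x) = 1$; these constraints force $\|x\| = 1$, so $x\in\mathbb{S}_X$, and consequently
\begin{align*}
\big|x^*(f(x))\big| \leq \|x^*\|\,\|f(x)\| = \|f(x)\| \leq \sup_{y\in\mathbb{S}_X}\|f(y)\|.
\end{align*}
Passing to the supremum over all such pairs gives $v(f) \leq \sup_{y\in\mathbb{S}_X}\|f(y)\|$.

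Assembling these three facts closes the chain: $\|f\| = v(f)\leq \sup_{x\in\mathbb{S}_X}\|f(x)\|\leq\|f\|$, so every inequality is an equality and $\|f\| = \sup_{x\in\mathbb{S}_X}\|f(x)\|$. I do not expect a genuine obstacle, since the corollary is essentially a restatement of Theorem~\ref{th.30}; the only subtlety worth flagging is the elementary remark that the normalization $\|x^*\| = x^*(x) = 1$ automatically places $x$ on $\mathbb{S}_X$, which is exactly what guarantees that $v(f)$ probes only sphere values and is therefore controlled by $\sup_{x\in\mathbb{S}_X}\|f(x)\|$ rather than by the potentially larger quantity $\|f\|$ taken over the whole ball.
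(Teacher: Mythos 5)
Your proposal is correct and is exactly the intended argument: the paper states Corollary~\ref{cr.40} without proof as an ``immediate consequence'' of Theorem~\ref{th.30}, and the chain $\|f\| = v(f) \leq \sup_{x\in\mathbb{S}_X}\|f(x)\| \leq \|f\|$ is precisely how it follows. One hair-splitting remark: for $x$ ranging over all of $X$ the conditions $\|x^*\| = x^*(x) = 1$ only force $\|x\| \geq 1$, not $\|x\| = 1$; what actually confines the supremum to sphere values is the paper's definition $v(f) := v(f|_{\mathbb{S}_X})$ (equivalently, $x$ ranging over $\mathbb{B}_X$, where $1 = x^*(x) \leq \|x\| \leq 1$ does pin $x$ to $\mathbb{S}_X$) --- this does not affect the validity of your argument.
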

\textbf{Acknowledgement.}
The author would like to thank the referees for their careful
reading of the manuscript and useful comments.
\bibliographystyle{amsplain}

\end{document}